\numberwithin{equation}{section}
\theoremstyle{plain}
\newtheorem{theorem}{Theorem}[section]
\newtheorem{lemma}[theorem]{Lemma}
\newtheorem{corollary}[theorem]{Corollary}
\newtheorem{remark}[theorem]{Remark}
\numberwithin{equation}{section}
\def\li{\langle}
\def\ri{\rangle}
\def\R{\mathbb{R} }
\def\fg{\mathfrak{g}}
\def\gl{\mathfrak{gl}}
\def\df{(dF)_{(\tau,A,B,C)}}
\begin{document}
\title{A simple perturbation of Vafa-Witten equations and a transversality result}
\author{Bo Dai}
\address{School of Mathematical Sciences,
	Peking University, Beijing 100871, China}
\email{daibo@math.pku.edu.cn}

\author{Ren Guan}
\address{School of Mathematics and Statistics, Jiangsu Normal University, Xuzhou 221100, China}
\email{guanren@jsnu.edu.cn}

\begin{abstract}
We consider a simple perturbation of the Vafa-Witten equations, and prove that for generic perturbation parameter, the full rank part of the perturbed Vafa-Witten moduli space satisfies transversality condition, when the structure group is $SU(2)$ or $SO(3)$. 
\end{abstract}

\maketitle

\section{Introduction}

The Vafa-Witten equations came from a topological twist of $\mathcal{N}=4$ supersymmetric Yang-Mills theory. Vafa and Witten conjectured that the partition function of their theory transforms as modular form under $S$-duality, an important duality in quantum field theory. 

Let $X$ be a smooth, oriented, Riemannian 4-manifold, $P$ be a principal bundle over $X$ with structure group $G$, $\fg_P$ be the adjoint bundle of $P$. The Vafa-Witten equations have the following form:
\begin{equation}\left\{
	\begin{aligned}
		&d_A^*B +d_A C=0,\\&F_A^++\frac{1}{8}[B\centerdot B]+\frac{1}{2}[B,C]=0,  \\
	\end{aligned}\right. \label{aa}
\end{equation}
where $A$ is a connection of the principal bundle $P$, $B$ is a self-dual 2-form with values in the adjoint bundle $\fg_P$, $C$ is a section of the adjoint bundle $\fg_P$, and $[B\centerdot B]$ is a combination of the Lie bracket of $\fg$ and the Lie algebra structure of $\Lambda^{2,+}\R^4 \cong so(3)$.  The Vafa-Witten equations are invariant under the action of  gauge group. The Vafa-Witten moduli space is the space of solutions to the Vafa-Witten equations modulo gauge transformation. 
 
 Mares and Taubes did fundamental work to the analytic aspect of Vafa-Witten theory,\cite{Ma,Tau}, but key challenges remain, such as the compactification and transversality of the Vafa-Witten moduli spaces. Tanaka and Thomas made key contribution to the algebraic aspect of Vafa-Witten theory. They defined Vafa-Witten invariants for projective surfaces by the method of derived algebraic geometry and virtual intersection theory, and verified $S$-duality in some cases, \cite{TT1,TT2}. Since then, the algebraic aspect of Vafa-Witten theory developed rapidly. 
 
 In this short note, we consider a simple perturbed version of the Vafa-Witten equations, and obtain a transversality result of the perturbed Vafa-Witten moduli spaces.

The perturbed Vafa-Witten equations on $X$ are
\begin{equation}\left\{
		\begin{aligned}
			&d_A^*B +d_A C=0,\\&F_A^++\frac{1}{8}[B\centerdot B]+\frac{1}{2}[B,C]+\tau B=0,  \\
		\end{aligned}\right. \label{ab}
	\end{equation}
where the triple $(A,B,C)$ is as above,  $\tau$ is a section of the bundle $\gl(\Lambda^{2,+}T^*X)$. We note that
$\tau$ acts on $\fg_P$ trivially, i.e., as identity.
Thus the perturbed Vafa-Witten equations are invariant under gauge group action,
and the perturbed moduli space $\mathcal{M}_{VW,\tau}(P)$ is well-defined.

To apply analytic tools, we need to work on appropriate Sobolev spaces. Let $$\begin{aligned}
	\mathcal{C}_k(P)&:=\mathcal{A}_k (P)\times L_k^2(X,\fg_P\otimes\Lambda^{2,+})\times L_k^2(X, \fg_P),\\
	\mathcal{C}_{k-1}'(P)&:=L_{k-1}^2(X,\fg_P\otimes\Lambda^1)\times L_{k-1}^2(X,\fg_P\otimes\Lambda^{2,+}),
\end{aligned}$$
be the configuration spaces, $C^r(X,\gl(\Lambda^{2,+}))$ be the parameter space, where $r>k\geq2$ are integers. Let $\mathcal{G}_{k+1}(P)$ be the group of $L_{k+1}^2$ gauge transformations. In the remaining part of this note, we take the structure group $G$ to be $SU(2)$ or $SO(3)$, and follow notations in \cite{DG}.

\begin{theorem}\label{m}
    Let $X$ be a closed, oriented, smooth, Riemannian 4-manifold, and $P\to X$ be a principal $SU(2)$- or $SO(3)$-bundle. Then for generic perturbation $\tau\in C^r(X,\gl(\Lambda^{2,+}))$, where $r>k\geq 2$ are integers, the full rank part of the moduli space, $\mathcal{M}_{VW,\tau}^{(3)}(P)$, is a smooth, oriented, zero-dimensional manifold in the quotient space $\mathcal{B}_k(P)=\mathcal{C}_k(P)/\mathcal{G}_{k+1}(P)$.
\end{theorem}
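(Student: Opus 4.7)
I would follow the classical universal moduli / Sard--Smale paradigm. Define the parametrized smooth map
\begin{equation*}
F:\mathcal{C}_k(P)\times C^r(X,\gl(\Lambda^{2,+}))\to\mathcal{C}_{k-1}'(P)
\end{equation*}
sending $(A,B,C,\tau)$ to the pair of left-hand sides of \eqref{ab}; it is $\mathcal{G}_{k+1}(P)$-equivariant because $\tau$ acts as the identity on $\fg_P$. Restricting $F^{-1}(0)$ to the open locus where $B$ has pointwise full rank (as a linear map between the two rank-three bundles $\Lambda^{2,+}$ and $\fg_P$) defines the relevant universal perturbed moduli space; on this locus the gauge isotropy is automatically trivial for $SU(2)$ or $SO(3)$, and the quotient is exactly what we want to parametrize by $\tau$.

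The main step is to show that at every $(A,B,C,\tau)\in F^{-1}(0)$ with $B$ of full rank, the total differential $\df$ is surjective. Suppose $(\phi,\psi)\in L^2(\fg_P\otimes\Lambda^1)\oplus L^2(\fg_P\otimes\Lambda^{2,+})$ annihilates its image. Testing against pure $\tau$-variations gives
\begin{equation*}
\int_X\langle\psi,\delta\tau\cdot B\rangle\,d\mathrm{vol}=0\qquad\text{for every }\delta\tau\in C^r(X,\gl(\Lambda^{2,+})).
\end{equation*}
Bump function localisation reduces this to the pointwise requirement that $\psi(x)$ be orthogonal to the image of $\delta\tau_x\mapsto\delta\tau_x\cdot B(x)$; writing $B=\sum_i b_i\otimes e_i$ in a local frame of $\Lambda^{2,+}$, this image equals the full fiber $(\fg_P\otimes\Lambda^{2,+})_x$ precisely when $\{b_i\}$ is a basis of $\fg_P$, i.e.\ when $B(x)$ has full rank. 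Hence $\psi\equiv 0$ on the (open) full rank locus. Testing against $(a,b,c)$-variations yields the formal adjoint equation of the unperturbed deformation operator applied to $(\phi,\psi)$, which is an elliptic system modulo the standard gauge fixing built into the deformation complex; elliptic unique continuation then propagates $\psi\equiv 0$ to all of $X$, and a further pairing argument --- again exploiting the full rank of $B$ --- forces $\phi\equiv 0$.

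Once surjectivity is established, the implicit function theorem makes the universal moduli space, restricted to the full rank stratum, into a smooth Banach manifold, and the Sard--Smale theorem applied to its projection onto $C^r(X,\gl(\Lambda^{2,+}))$ produces a residual (in particular dense) set of parameters $\tau$ for which $\mathcal{M}_{VW,\tau}^{(3)}(P)$ is a smooth manifold. Its dimension equals the Fredholm index of the unperturbed Vafa--Witten deformation complex, which is zero, and its orientation is inherited from the determinant line of the underlying ASD subcomplex, both facts following from the index and orientation computations already carried out in \cite{DG}.

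The principal obstacle is the concluding part of the surjectivity argument: propagating $\psi\equiv 0$ off the full rank open set and then eliminating $\phi$. This rests on elliptic unique continuation for the adjoint deformation operator --- which remains elliptic because $\tau B$ contributes only a zeroth-order term --- coupled with the full rank hypothesis, which simultaneously makes $\delta\tau\mapsto\delta\tau\cdot B$ pointwise surjective and trivialises the gauge isotropy, giving the parametrized transversality machinery enough room to close.
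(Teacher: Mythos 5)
Your overall architecture (parametrized map $F$, annihilator argument for surjectivity, pointwise use of the freedom in $\delta\tau$ to kill $\psi$ on the full rank locus, Sard--Smale for the projection to the parameter space, index zero, orientation from \cite{DG}) matches the paper's, and your pointwise justification that $\delta\tau\mapsto\delta\tau\cdot B$ surjects onto the fiber exactly where $B$ has rank $3$ is correct. But the concluding step of the surjectivity argument --- the one you yourself flag as the ``principal obstacle'' --- is left unresolved, and the order in which you propose to close it does not work. Unique continuation applies to the \emph{coupled} first order elliptic system satisfied by the pair $(\phi,\psi)$ (the formal adjoint of the deformation operator); there is no standalone elliptic equation for $\psi$, so you cannot ``propagate $\psi\equiv 0$ to all of $X$'' before $\phi$ has been dealt with. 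Conversely, your proposed ``further pairing argument exploiting the full rank of $B$'' to then eliminate $\phi$ is only available on the open stratum $X^{(3)}(B)$ where $B$ actually has rank $3$; it gives nothing off that set.

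The missing ingredient is an algebraic lemma, and it is the genuinely new content of the paper's proof (Lemma~\ref{BC}): for $B$ of rank $3$ and arbitrary $C$, the linear map $\phi\mapsto[B\centerdot\phi]+[C,\phi]$ on $\mathfrak{su}(2)\otimes\Lambda^1\R^4$ is injective, proved by putting $B$ in singular value form and computing a $12\times 12$ determinant equal to $16\bigl(B_1^2B_2^2B_3^2+C_1^2B_2^2B_3^2+B_1^2C_2^2B_3^2+B_1^2B_2^2C_3^2\bigr)^2>0$. Note this is not obvious: the first adjoint equation on the set where $\psi=0$ reads $[\phi\centerdot B]+[\phi,C]=0$, and the $[\phi,C]$ term could in principle create kernel, so an explicit nondegeneracy computation (or an equivalent argument) is required rather than a bare appeal to ``full rank of $B$.'' With that lemma in hand the correct order is: the fourth adjoint equation gives $\psi=0$ on $X^{(3)}(B)$; the lemma then gives $\phi=0$ on $X^{(3)}(B)$; only now, with the \emph{pair} vanishing on a nonempty open set, does unique continuation for the coupled elliptic system with $C^r$ coefficients (Aronszajn/Kazdan, after invoking elliptic regularity to make the coefficients $C^r$) yield $(\phi,\psi)\equiv(0,0)$ on $X$ and hence surjectivity of $\df$. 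As written, your proposal has a genuine gap at exactly this point.
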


\begin{remark}
	Other kind of perturbations of Vafa-Witten equations had been studied in \cite{G1,G2,Tan1}.
\end{remark}

\begin{remark}
A PhD student, Mr. Zhengxiong Cao from Humboldt-Berlin,
pointed out an error in our earlier paper \cite{DG}. That is, the three equations in \cite{DG} (4.3) do not hold separately. 
Theorem~\ref{m} is a replacement of Theorems 1, 4 and 5 in \cite{DG}. 
\end{remark}

\section{Prove of the main theorem}

Consider the parameterized Vafa-Witten map
\begin{equation}\begin{aligned}
F:C^r(X,\gl(\Lambda^{2,+}))\times\mathcal{C}_k(P)&\to\mathcal{C}_{k-1}'(P),\\
			F(\tau,A,B,C)&:=\begin{pmatrix}
				d_A^*B+d_AC\\
				F_A^++\frac{1}{8}[B\centerdot B]+\frac{1}{2}[B,C]+\tau B\\
			\end{pmatrix}.
		\end{aligned} \label{ac}
	\end{equation}
\begin{lemma} \label{sj}
    With notation in \eqref{ac}, at any point $(\tau,A,B,C)$ where $F(\tau,A,B,C)=0$ and $B$ has rank 3, the differential $dF$ at this point is surjective.
\end{lemma}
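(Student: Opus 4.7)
The plan is a standard Sard--Smale transversality argument: verify $dF$ has closed image and trivial $L^2$-cokernel. For closedness, $\partial_{(A,B,C)}F$ shares its principal symbol with the linearization of the unperturbed Vafa--Witten operator, which is elliptic modulo gauge; after fixing a gauge slice it becomes Fredholm, so $dF$---extending this operator by the bounded map $\dot\tau\mapsto(0,\dot\tau B)$---has closed range. It then suffices to show that any $(\eta,\sigma)\in L^2(X,\fg_P\otimes\Lambda^1)\times L^2(X,\fg_P\otimes\Lambda^{2,+})$ annihilating $\operatorname{image}(dF)$ is zero, elliptic regularity upgrading its smoothness automatically.

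The key use of the parameter is the $\dot\tau$-variation, which affects only the second component: $dF(\dot\tau,0,0,0)=(0,\dot\tau B)$. The orthogonality $\int_X\langle\sigma,\dot\tau B\rangle=0$ for all $\dot\tau\in C^r(X,\gl(\Lambda^{2,+}))$, after expanding $B=\sum_i\omega^i\otimes B_i$ and $\sigma=\sum_j\omega^j\otimes\sigma_j$ in a local orthonormal frame $\{\omega^i\}$ of $\Lambda^{2,+}$ and letting $\dot\tau$ range over arbitrary matrix-valued bumps, becomes the pointwise relations $\langle\sigma_j,B_i\rangle_{\fg_P}=0$ for all $i,j$. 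Where $B$ has rank $3$, the triple $\{B_1,B_2,B_3\}$ is a basis of the three-dimensional fiber $\fg_{P,x}$ (using $\dim\fg=3$ for $G=SU(2)$ or $SO(3)$), forcing $\sigma$ to vanish identically on the open rank-$3$ locus $U\subset X$.

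The $(\dot a,\dot b,\dot c)$-variations give further orthogonality relations that assemble into a first-order linear system for $(\eta,\sigma)$ whose principal part is the adjoint of the Vafa--Witten linearization; after a gauge slice this system has injective symbol and is elliptic. On $U$ the $\sigma$-part vanishes, so the $\dot b$- and $\dot c$-equations reduce to $(d_A\eta)^+=0$ and $d_A^*\eta=0$, while the $\dot a$-equation collapses to the pointwise algebraic constraint $L_{B,C}^*\eta=0$ with $L_{B,C}(\dot a)=-*[\dot a,*B]+[\dot a,C]$. The rank-$3$ hypothesis makes $L_{B,C}$ invertible pointwise on $U$ (a linear-algebra check exploiting the quaternionic identification of $\Lambda^{2,+}$ with $\operatorname{ad}(\fg_P)$ when $B$ is an isomorphism $\Lambda^{2,+}\to\fg_P$), yielding $\eta\equiv 0$ on $U$ as well. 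Thus $(\eta,\sigma)$ vanishes on the open set $U$, and Aronszajn--Agmon weak unique continuation for the elliptic system propagates this to $(\eta,\sigma)\equiv 0$ on all of $X$.

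The main technical obstacle is this final combined step: verifying coefficient regularity and symbol conditions so that Aronszajn--Agmon unique continuation applies to the $(\eta,\sigma)$-system, setting up the gauge slice so the relevant operator is truly elliptic, and carrying out the pointwise algebraic calculation showing the rank-$3$ assumption really forces $L_{B,C}^*\eta=0$ to have only the trivial solution. Once these are in place, triviality of the cokernel---hence surjectivity of $dF$---follows.
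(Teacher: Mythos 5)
Your proposal follows essentially the same route as the paper's own proof: use the arbitrariness of $\delta\tau$ to force the $\Lambda^{2,+}$-component of a cokernel element to vanish on the rank-3 locus, use pointwise invertibility of the algebraic map $\phi\mapsto[B\centerdot\phi]+[C,\phi]$ to kill the $\Lambda^1$-component there, and conclude by Aronszajn/Kazdan unique continuation for the first order elliptic adjoint system, with closed range coming from ellipticity after accounting for the gauge action. The one step you defer --- the pointwise invertibility including the $[\cdot\,,C]$ term, which is not immediate from the quaternionic picture alone --- is exactly what the paper's Lemma~\ref{BC} supplies via an explicit $12\times12$ determinant computation.
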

\begin{proof}
    The differential $(dF)_{(\tau,A,B,C)}$ differs from $d^1_{(A,B,C)}$ in \cite{Ma} page 28 slightly, given by
$$
(dF)_{(\tau,A,B,C)}:C^r(X,\gl(\Lambda^{2,+}))\oplus L_k^2(X,\fg_P\otimes\Lambda^{1})\oplus L_k^2(X,\fg_P\otimes\Lambda^{2,+})\oplus L_k^2(X, \fg_P)\to \mathcal{C}_{k-1}'(P),$$
	\begin{equation}
    	(\delta\tau,a,b,c)\mapsto \begin{pmatrix}
				d_A^*b+d_Ac-[B\centerdot a]-[C,a]\\
				d_A^+a+\frac{1}{4}[B\centerdot b]+\frac{1}{2}[b,C]+\frac12[B,c]+\tau b+(\delta\tau) B\\
			\end{pmatrix}.  \label{cc}
	\end{equation}
The linearization of the action of gauge group $\mathcal{G}_{k+1}(P)$ at a point $(\tau,A,B,C)\in C^r(X, \gl(\Lambda^{2,+}))\times \mathcal{C}_k(P)$
is given by
$$
d_{(\tau,A,B,C)}^0:L_{k+1}^2(X,\fg_P)\to C^r(X, \gl(\Lambda^{2,+}))\oplus L_k^2(X,\fg_P\otimes\Lambda^{1})\oplus L_k^2(X,\fg_P\otimes\Lambda^{2,+})\oplus L_k^2(X, \fg_P), 
$$
	\begin{equation}		
		\xi \mapsto(0,d_A\xi,[B,\xi],[C,\xi]).  \label{ae}
	\end{equation}
The $L^2$-adjoint of $d_{(\tau,A,B,C)}^0$ is
\begin{equation}
    d_{(\tau,A,B,C)}^{0,*}(\delta\tau,a,b,c)=d_A^*a+[b\cdot B]+[c,C]. \label{af}
\end{equation}
Since $(dF)_{(\tau,A,B,C)}+d_{(\tau,A,B,C)}^{0,*}$ is a family of first order elliptic operators depending on $\delta\tau$ linearly, $(dF)_{(\tau,A,B,C)}$ has closed image. Assume $F(\tau,A,B,C)=0$ and $B$ has rank 3. By the regularity theorem for elliptic equations with $C^r$ coefficients (cf. \cite{FL}, Proposition 3.7 and Remark 5.1), we may assume that $(A,B,C)$ is a $C^r$ solution of the perturbed Vafa-Witten equations \eqref{ab}.   To prove that $(dF)_{(\tau,A,B,C)}$  is surjective, it suffices to show that $(\mathrm{Im}(dF)_{(\tau,A,B,C)})^\perp=0$. Suppose $(\phi,\psi)\in(\mathrm{Im}(dF)_{(\tau,A,B,C)})^\perp$. Then we have
\begin{equation}
    \langle d_A^*b+d_Ac-[B\centerdot a]-[C,a],\phi\rangle_{L^2}+\langle d_A^+a+\frac{1}{4}[B\centerdot b]+\frac{1}{2}[b,C]+\frac12[B,c]+\tau b+(\delta\tau) B,\psi\rangle_{L^2}=0 \label{ag}
\end{equation} for all $(\delta\tau,a,b,c)\in C^r(X, \gl(\Lambda^{2,+})) \oplus L_k^2(X,\fg_P\otimes\Lambda^{1})\oplus L_k^2(X,\fg_P\otimes\Lambda^{2,+})\oplus L_k^2(X, \fg_P)$.

Computing the $L^2$-adjoint operator of $(dF)_{(\tau,A,B,C)}$ from equation \eqref{ag}, we deduce that $(\phi, \psi)$ satisfy the following system of equations
\begin{equation} 
	\begin{cases} 
	 d_A^* \psi -[\phi\centerdot B] -[\phi, C] =0, & \\
	 d_A^+ \phi +\frac14 [\psi\centerdot B] -\frac12 [\psi, C] +\tau^t \psi =0, & \\
	 d_A^*\phi +\frac12 [\psi \cdot B]=0, & \\
	 \li (\delta\tau)B,\psi \ri_{L^2}=0, \ \forall\, \delta\tau \in C^r(X, \gl(\Lambda^{2,+})). & \\
	 \end{cases} \label{pp}
\end{equation}

Note that the first three equations in \eqref{pp} differ from the expression for the operator $d^{1,*}_{(B,C,A)}$ on page 29 of \cite{Ma} by one term $\tau^t \psi$, where $\tau^t$ is the transpose of $\tau$ with respect to the Riemannian metric on the bundle $\Lambda^{2,+}$. 

Since $\delta\tau\in C^r(X,\gl(\Lambda^{2,+}))$ is arbitrary, from the 4th equation of \eqref{pp}, we deduce that  $\psi =0$ on $X^{(3)}(B)$, where $B$ has pointwise rank 3. 

The first equation in \eqref{pp} implies that
\begin{equation}
	[\phi \centerdot B]+[\phi,C]=0
\end{equation}
on $X^{(3)}(B)$. Lemma \ref{BC} below implies that $\phi=0$ on $X^{(3)}(B)$.

The first three equations in \eqref{pp} is a system of  first order linear differential equations of elliptic type for $(\phi,\psi)$ with  $C^r$ coefficients. By the unique continuation theorem (cf \cite{Ar,Kaz}), we deduce that $(\phi, \psi)=(0,0)$ on $X$. This implies that $\df$ is surjective.
\end{proof}

\begin{corollary}\label{c}
    Let $\mathfrak{M}_{VW}(P)=\bigcup_{\tau}\mathcal{M}_{VW,\tau}(P)$ be the parameterized Vafa-Witten moduli space, where $\tau$ ranges over $C^r(X,\gl(\Lambda^{2,+}))$, and $\mathfrak{M}_{VW}^{(3)}(P)$ be the full rank part of the parameterized Vafa-Witten moduli space where the component $B$ has rank 3. Then $\mathfrak{M}_{VW}^{(3)}(P)$ is an (infinite dimensional) smooth manifold. The projection
    \begin{equation}
        \pi:\mathfrak{M}_{VW}^{(3)}(P)\to C^r(X, \gl(\Lambda^{2,+})), \quad  [(\tau,A,B,C)] \mapsto \tau, 
    \end{equation}
    is Fredholm of index 0.
\end{corollary}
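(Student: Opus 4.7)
First, Lemma~\ref{sj} asserts surjectivity of $dF$ at every $(\tau,A,B,C)\in F^{-1}(0)$ where $B$ has pointwise rank~$3$ somewhere. Since $dF$ is a bounded linear map between Banach spaces whose kernel splits, the Banach-category implicit function theorem realizes the full-rank part of $F^{-1}(0)$ as a smooth Banach submanifold of $C^r(X,\gl(\Lambda^{2,+}))\times\mathcal{C}_k(P)$. On this submanifold the gauge group acts with trivial infinitesimal stabilizer: any $\xi\in L_{k+1}^2(X,\fg_P)$ fixing $(\tau,A,B,C)$ satisfies $[B,\xi]=0$ and $d_A\xi=0$, and a pointwise linear-algebra argument analogous to Lemma~\ref{BC} forces $\xi\equiv 0$ on $X^{(3)}(B)$, after which unique continuation for $d_A^*d_A\xi=0$ extends this to all of $X$. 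A Coulomb-type slice $\{(a,b,c):d^{0,*}_{(\tau,A,B,C)}(a,b,c)=0\}$ then provides local charts, making $\mathfrak{M}^{(3)}_{VW}(P)$ into a smooth Banach manifold and $\pi$ into a smooth map.

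For the Fredholm claim, fix a class $[(\tau,A,B,C)]$ and let $d^0$, $d^1_\tau$ be the differentials in \eqref{ae} and in \eqref{cc} restricted to $\delta\tau=0$; these define the deformation complex of the fixed-$\tau$ equations, with cohomology groups $H^0,H^1,H^2$. I would identify $\ker d\pi\cong H^1$ and $\mathrm{coker}\,d\pi\cong H^2$. The first is immediate from the definition of the tangent space to $\mathfrak{M}^{(3)}_{VW}(P)$ in the slice. For the second, $\delta\tau\in\mathrm{Im}\,d\pi$ if and only if $(0,(\delta\tau)B)\in\mathrm{Im}\,d^1_\tau$, so there is a natural linear map $C^r(X,\gl(\Lambda^{2,+}))\to H^2$ whose kernel equals $\mathrm{Im}\,d\pi$; the full surjectivity of $dF$ from Lemma~\ref{sj} says precisely that this map is onto, giving $\mathrm{coker}\,d\pi\cong H^2$. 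Elliptic theory for the complex makes $H^1$ and $H^2$ finite-dimensional, so $\pi$ is Fredholm of index $h^1-h^2$.

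Finally, since gauge stabilizers are trivial on the full-rank locus, $h^0=0$, and hence $\ind(\pi)=h^1-h^2=-\chi$, minus the analytic index of the rolled-up elliptic operator $D=d^1_\tau+d^{0,*}$. The principal symbol of $D$ is insensitive to the zero-order terms in $\tau,B,C$, and decouples into the Atiyah-Hitchin-Singer operator $d_A^++d_A^*\colon L_k^2(X,\fg_P\otimes\Lambda^1)\to L_{k-1}^2(X,\fg_P\otimes\Lambda^{2,+})\oplus L_{k-1}^2(X,\fg_P)$ on the $a$-component and its formal $L^2$-adjoint $(d_A^*,d_A)$ on the $(b,c)$-component; by Atiyah-Singer these two contributions carry opposite topological indices and cancel, so $\ind(\pi)=0$. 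The main technical care I anticipate is the cokernel identification: the parameter direction $\delta\tau$ must be threaded through $dF$ using the full surjectivity established in Lemma~\ref{sj} rather than merely the surjectivity of $d^1_\tau$. Once this is handled, the remaining Fredholm analysis and the index vanishing reduce to standard gauge-theoretic and symbol-level arguments.
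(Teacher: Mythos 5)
Your proposal is correct and takes essentially the same route as the paper: surjectivity of $dF$ from Lemma~\ref{sj} plus the implicit function theorem and a Coulomb slice give the smooth manifold structure, and the Fredholm index~$0$ claim is reduced to the fixed-$\tau$ rolled-up operator $d^1_\tau+d^{0,*}_{(\tau,A,B,C)}$, which the paper simply asserts is elliptic of index $0$ and which you justify by the symbol-level cancellation between the Atiyah--Hitchin--Singer operator and its adjoint, together with the standard identification of $\ker d\pi$ and $\mathrm{coker}\,d\pi$ with $H^1$ and $H^2$. The only minor differences are cosmetic: the paper invokes Lemma~1 of \cite{DG} for the full stabilizer being $Z(G)$ (you verify only the infinitesimal stabilizer, which is easily upgraded), and your sign relating the index of $\pi$ to that of the rolled-up operator is off, harmlessly since that index vanishes.
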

\begin{proof}
    By Lemma 1 in \cite{DG}, when $B$ has rank 3, we have $\mathrm{Stab}(A,B,C)=Z(G)$. Lemma~\ref{sj} implies that $\mathfrak{M}_{VW}^{(3)}(P)$ is a smooth manifold. For fixed $\tau$, $\df+d_{(\tau,A,B,C)}^{0,*}$ is a first order elliptic operator of index 0. Thus the projection $\pi$ is Fredholm of index 0.
\end{proof}

\emph{Proof of Theorem \ref{m}.} By Corollary \ref{c} and the Sard-Smale theorem (cf. \cite{MS}, Theorem A.5.1), for generic perturbation $\tau$ in the sense of second category, the full rank part of the perturbed Vafa-Witten moduli space $\mathcal{M}_{VW,\tau}^{(3)}(P)$ is a smooth 0-dimensional manifold. The orientation of the moduli space is established in \cite{DG} and \cite{JTU}. \qed

\begin{lemma} \label{BC}
    Suppose that $B\in\mathfrak{su}(2)\otimes\Lambda^{2,+}\R^4$ has rank 3, and $C\in\mathfrak{su}(2)$. Then the linear map
    \begin{align*}
        L_{B,C}:\mathfrak{su}(2)\otimes\Lambda^1\R^4&\to \mathfrak{su}(2)\otimes\Lambda^1\R^4\\
        \phi&\mapsto[B\centerdot\phi]+[C,\phi]
    \end{align*}
    is an isomorphism.
\end{lemma}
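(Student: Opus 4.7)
The plan is to prove that $L_{B,C}$ is injective; since both source and target have real dimension $12$, this suffices. The reduction proceeds in three stages: normalize $B$, reformulate the equation over the quaternions, and solve the resulting $3\times 3$ quaternionic linear system.

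\emph{Normalization.} Regarding $B\in\mathfrak{su}(2)\otimes\Lambda^{2,+}\R^4$ as a linear map $(\Lambda^{2,+}\R^4)^*\to\mathfrak{su}(2)$, the rank-$3$ hypothesis makes it an isomorphism. The operator $L_{B,C}$ is equivariant under the natural $SO(\R^4)\times SU(2)$ action (the first factor induces an $SO(3)$ on $\Lambda^{2,+}\R^4$; the second acts by the adjoint representation on $\mathfrak{su}(2)$, and hence on $C$), so by a singular value decomposition of $B$ one may choose orthonormal bases $\{e_i\}_{i=1}^{3}$ of $\mathfrak{su}(2)$ (with $[e_i,e_j]=\epsilon_{ijk}e_k$) and $\{\omega^i\}_{i=1}^{3}$ of $\Lambda^{2,+}\R^4$ in which $B=\sum_{i}\lambda_i\, e_i\otimes\omega^i$ with $\lambda_i>0$, and write $C=\sum_i c_i e_i$. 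Identify $\Lambda^1\R^4\cong\H$ so that the natural action of $\omega^i$ on $\Lambda^1$ becomes left multiplication by $\mathbf i,\mathbf j,\mathbf k$ respectively.

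\emph{Quaternionic equations.} Write $\phi=\sum_{j=1}^{3} e_j\otimes\alpha_j$ with $\alpha_j\in\H$. Expanding each $e_k\otimes\Lambda^1$ component of $L_{B,C}(\phi)=0$ using the relations $[e_i,e_j]=\epsilon_{ijk}e_k$ yields the three quaternionic identities
\[
T_2\alpha_3=T_3\alpha_2,\qquad T_3\alpha_1=T_1\alpha_3,\qquad T_1\alpha_2=T_2\alpha_1,
\]
where $T_i\colon\H\to\H$ denotes left multiplication by the quaternion $q_i:=c_i+\lambda_i\hat{\mathbf e}_i$, with $\hat{\mathbf e}_1=\mathbf i,\ \hat{\mathbf e}_2=\mathbf j,\ \hat{\mathbf e}_3=\mathbf k$. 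Since $\lambda_i>0$, each $q_i$ is nonzero, so each $T_i$ is invertible. Using the second and third equations to express $\alpha_3=q_1^{-1}q_3\,\alpha_1$ and $\alpha_2=q_1^{-1}q_2\,\alpha_1$, and substituting into the first, reduces the whole system to the single condition
\[
\bigl(q_2q_1^{-1}q_3-q_3q_1^{-1}q_2\bigr)\,\alpha_1=0.
\]

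\emph{The key computation.} It remains to show that the quaternion $q_2q_1^{-1}q_3-q_3q_1^{-1}q_2$ is nonzero. A short direct calculation in $\H$ gives that its real (scalar) part equals $-2\lambda_1\lambda_2\lambda_3/|q_1|^2$, which is nonzero because $\lambda_i>0$. Hence left multiplication by this quaternion is invertible, $\alpha_1=0$, and the other two equations then force $\alpha_2=\alpha_3=0$, yielding $\phi=0$. I expect this scalar-part computation to be the only delicate step: it is precisely where the rank-$3$ hypothesis, through $\lambda_1\lambda_2\lambda_3>0$, prevents cancellation that could a priori arise from an arbitrary choice of the scalar parts $c_i$ of $C$.
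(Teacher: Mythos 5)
Your proof is correct, and it takes a genuinely different route from the paper. The paper works in the same SVD normal form for $B$, but then writes out $[B\centerdot\phi]+[C,\phi]=0$ as an explicit $12\times 12$ real linear system in the components $\phi_{ij}$ and evaluates its determinant directly, obtaining $16\left(B_1^2B_2^2B_3^2+C_1^2B_2^2B_3^2+B_1^2C_2^2B_3^2+B_1^2B_2^2C_3^2\right)^2>0$. You instead exploit the quaternionic structure: identifying $\Lambda^1\R^4\cong\mathbb{H}$ so that the self-dual forms $\omega^i$ act by left multiplication by $\mathbf i,\mathbf j,\mathbf k$ (this matches the component formulas the paper writes out), the kernel equations collapse to $q_2\alpha_3=q_3\alpha_2$, $q_3\alpha_1=q_1\alpha_3$, $q_1\alpha_2=q_2\alpha_1$ with $q_i=c_i+\lambda_i\hat{\mathbf e}_i$, and your scalar-part identity $\mathrm{Re}\,(q_2q_1^{-1}q_3-q_3q_1^{-1}q_2)=-2\lambda_1\lambda_2\lambda_3/|q_1|^2$ checks out (I verified it), so injectivity follows from $\lambda_1\lambda_2\lambda_3\neq 0$. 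Your argument is shorter and more conceptual, isolating exactly where rank $3$ enters; the paper's brute-force determinant, on the other hand, yields strictly more information, e.g.\ it shows $L_{B,C}$ is still injective when one $B_i$ vanishes provided the corresponding $C_i\neq 0$ and the other two $B_j$ are nonzero, a case your reduction (which inverts $q_1$ and needs the product $\lambda_1\lambda_2\lambda_3$) does not address, though it is not needed for the lemma as stated. One small imprecision: since only $SO(3)\times SO(3)$ worth of basis changes is available (from $SO(4)$ acting on $\Lambda^{2,+}$ and the adjoint action on $\mathfrak{su}(2)$), the SVD lets you assume the $\lambda_i$ are nonzero but not necessarily all positive; this is harmless, as your computation only uses $\lambda_1\lambda_2\lambda_3\neq 0$. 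Likewise your bracket normalization $[e_i,e_j]=\varepsilon_{ijk}e_k$ differs from the paper's $[\eta_i,\eta_j]=2\varepsilon_{ijk}\eta_k$ by a rescaling that does not affect the nonvanishing argument.
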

\begin{proof}
    According to the singular value decomposition for $\mathfrak{su}(2)\otimes\Lambda^{2,+}\mathbb{R}^4$ in  \cite[\S 4.1.1]{Ma}, there exist an oriented orthonormal basis $\{e^1,e^2,e^3,e^4\}$ for $\mathbb{R}^4$ and an orthonormal basis $\{\eta_1,\eta_2,\eta_3\}$ for $\mathfrak{su}(2)$   such that $[\eta_i,\eta_j]=2 \varepsilon_{ijk}\eta_k$, where $\varepsilon$ is the symbol anti-symmetric in the indices $i,j,k\in \{ 1,2,3 \}$, and  $B=B_{1}\eta_1\otimes(e^1\wedge e^2+e^3\wedge e^4)+B_{2}\eta_2\otimes(e^1\wedge e^3+e^4\wedge e^2)+B_{3}\eta_3\otimes(e^1\wedge e^4+e^2\wedge e^3)$, 
		where $B_{1},B_{2},B_{3}\in\R$. Let 
        \begin{equation}
            \begin{aligned}
\phi=&(\phi_{11}\eta_1+\phi_{12}\eta_2+\phi_{13}\eta_3)e^{1}+(\phi_{21}\eta_1+\phi_{22}\eta_2+\phi_{23}\eta_3)e^{2}\\
&+(\phi_{31}\eta_1+\phi_{32}\eta_2+\phi_{33}\eta_3)e^{3}+(\phi_{41}\eta_1+\phi_{42}\eta_2+\phi_{43}\eta_3)e^{4},
\end{aligned}
        \end{equation}
         where $\phi_{ij}\in\R$, $i=1,2,3,4$, $j=1,2,3$, $C=C_1\eta_1+C_2\eta_2+C_3\eta_3$, $C_{1},C_{2},C_{3}\in\R$. Consider
        \begin{align*}
            0=&[B\centerdot\phi]+[C,\phi]\\
=&\big(-[B_1\eta_1,\phi_{21}\eta_1+\phi_{22}\eta_2+\phi_{23}\eta_3]-[B_2\eta_2,\phi_{31}\eta_1+\phi_{32}\eta_2+\phi_{33}\eta_3]\\
&-[B_3\eta_3,\phi_{41}\eta_1+\phi_{42}\eta_2+\phi_{43}\eta_3]\big)e^1+\big([B_1\eta_1,\phi_{11}\eta_1+\phi_{12}\eta_2+\phi_{13}\eta_3]\\
&+[B_2\eta_2,\phi_{41}\eta_1+\phi_{42}\eta_2+\phi_{43}\eta_3]-[B_3\eta_3,\phi_{31}\eta_1+\phi_{32}\eta_2+\phi_{33}\eta_3]\big)e^2\\
&+\big(-[B_1\eta_1,\phi_{41}\eta_1+\phi_{42}\eta_2+\phi_{43}\eta_3]+[B_2\eta_2,\phi_{11}\eta_1+\phi_{12}\eta_2+\phi_{13}\eta_3]\\
&+[B_3\eta_3,\phi_{21}\eta_1+\phi_{22}\eta_2+\phi_{23}\eta_3]\big)e^3+\big([B_1\eta_1,\phi_{31}\eta_1+\phi_{32}\eta_2+\phi_{33}\eta_3]\big)\\
&-[B_2\eta_2,\phi_{21}\eta_1+\phi_{22}\eta_2+\phi_{23}\eta_3]+[B_3\eta_3,\phi_{11}\eta_1+\phi_{12}\eta_2+\phi_{13}\eta_3]\big)e^4\\
&+[C_1\eta_1,(\phi_{11}\eta_1+\phi_{12}\eta_2+\phi_{13}\eta_3)e^{1}+(\phi_{21}\eta_1+\phi_{22}\eta_2+\phi_{23}\eta_3)e^{2}\\
&+(\phi_{31}\eta_1+\phi_{32}\eta_2+\phi_{33}\eta_3)e^{3}+(\phi_{41}\eta_1+\phi_{42}\eta_2+\phi_{43}\eta_3)e^{4}]\\
&+[C_2\eta_2,(\phi_{11}\eta_1+\phi_{12}\eta_2+\phi_{13}\eta_3)e^{1}+(\phi_{21}\eta_1+\phi_{22}\eta_2+\phi_{23}\eta_3)e^{2}\\
&+(\phi_{31}\eta_1+\phi_{32}\eta_2+\phi_{33}\eta_3)e^{3}+(\phi_{41}\eta_1+\phi_{42}\eta_2+\phi_{43}\eta_3)e^{4}]\\
&+[C_3\eta_3,(\phi_{11}\eta_1+\phi_{12}\eta_2+\phi_{13}\eta_3)e^{1}+(\phi_{21}\eta_1+\phi_{22}\eta_2+\phi_{23}\eta_3)e^{2}\\
&+(\phi_{31}\eta_1+\phi_{32}\eta_2+\phi_{33}\eta_3)e^{3}+(\phi_{41}\eta_1+\phi_{42}\eta_2+\phi_{43}\eta_3)e^{4}]\\
=&2\big((-B_2\phi_{33}+B_3\phi_{42}+C_2\phi_{13}-C_3\phi_{12})\eta_1+(B_1\phi_{23}-B_3\phi_{41}-C_1\phi_{13}+C_3\phi_{11})\eta_2\\
&+(-B_1\phi_{22}+B_2\phi_{31}+C_1\phi_{12}-C_2\phi_{11})\eta_3\big)e^1\\
&+2\big((B_2\phi_{43}+B_3\phi_{32}+C_2\phi_{23}-C_3\phi_{22})\eta_1+(-B_1\phi_{13}-B_3\phi_{31}-C_1\phi_{23}+C_3\phi_{21})\eta_2\\
&+(B_1\phi_{12}-B_2\phi_{41}+C_1\phi_{22}-C_2\phi_{21})\eta_3\big)e^2\\
&+2\big((B_2\phi_{13}-B_3\phi_{22}+C_2\phi_{33}-C_3\phi_{32})\eta_1+(B_1\phi_{43}+B_3\phi_{21}-C_1\phi_{33}+C_3\phi_{31})\eta_2\\
&+(-B_1\phi_{42}-B_2\phi_{11}+C_1\phi_{32}-C_2\phi_{31})\eta_3\big)e^3\\
&+2\big((-B_2\phi_{23}-B_3\phi_{12}+C_2\phi_{43}-C_3\phi_{42})\eta_1+(-B_1\phi_{33}+B_3\phi_{11}-C_1\phi_{43}+C_3\phi_{41})\eta_2\\
&+(B_1\phi_{32}+B_2\phi_{21}+C_1\phi_{42}-C_2\phi_{41})\eta_3\big)e^4,
        \end{align*}
      and  we get a system of linear equations of order 12 with respect to $\phi_{ij}$. The determinant is
\begin{equation}
\begin{aligned}
    &\left|
\begin{array}{cccccccccccc}
 0 & -C_3 & C_2 & 0 & 0 & 0 & 0 & 0 & -B_2 & 0
   & B_3 & 0 \\
 C_3 & 0 & -C_1 & 0 & 0 & B_1 & 0 & 0 & 0 &
   -B_3 & 0 & 0 \\
 -C_2 & C_1 & 0 & 0 & -B_1 & 0 & B_2 & 0
   & 0 & 0 & 0 & 0 \\
 0 & 0 & 0 & 0 & -C_3 & C_2 & 0 & B_3 & 0 & 0 &
   0 & B_2 \\
 0 & 0 & -B_1 & C_3 & 0 & -C_1 & -B_3 & 0
   & 0 & 0 & 0 & 0 \\
 0 & B_1 & 0 & -C_2 & C_1 & 0 & 0 & 0 & 0 &
   -B_2 & 0 & 0 \\
 0 & 0 & B_2 & 0 & -B_3 & 0 & 0 & -C_3 &
   C_2 & 0 & 0 & 0 \\
 0 & 0 & 0 & B_3 & 0 & 0 & C_3 & 0 & -C_1 & 0 &
   0 & B_1 \\
 -B_2 & 0 & 0 & 0 & 0 & 0 & -C_2 & C_1 & 0 & 0
   & -B_1 & 0 \\
 0 & -B_3 & 0 & 0 & 0 & -B_2 & 0 & 0 & 0 & 0 &
   -C_3 & C_2 \\
 B_3 & 0 & 0 & 0 & 0 & 0 & 0 & 0 & -B_1 & C_3 &
   0 & -C_1 \\
 0 & 0 & 0 & B_2 & 0 & 0 & 0 & B_1 & 0 & -C_2 &
   C_1 & 0 \\
\end{array}
\right|\\
=&16\left(B_1^2B_2^2B_3^2+C_1^2B_2^2B_3^2+B_1^2C_2^2B_3^2+B_1^2B_2^2C_3^2\right)^2>0.
\end{aligned}
\end{equation}
	Since $B$ has rank 3, $B_1B_2B_3\neq0$, we have $\phi =0$. It follows that $L_{B,C}$ is injective, and is an isomorphism.  
\end{proof}

\end{document}